\documentclass[12pt]{amsart}

\usepackage{amsmath}
\usepackage{amsfonts}
\usepackage{amssymb}
\usepackage{amsthm}
\usepackage{hyperref}
\usepackage{enumerate}
\usepackage{cleveref}
\usepackage{mathrsfs}
\usepackage[top=0.9in, bottom=1.15in, left=1.1in, right=1.1in]{geometry}

\newtheorem{theorem}{Theorem}[section]






\Crefname{conjecture}{Conjecture}{Conjectures}

\theoremstyle{plain}

\theoremstyle{plain}


\newcommand{\Z}{\mathbb{Z}}

\newcommand{\C}{\mathbb{C}}

\numberwithin{equation}{section}

\author{Larry Rolen}
\address{Mathematisches Institut der Universit\"at zu K\"oln, Weyertal 86-90,
D-50931 K\"oln, Germany} 
\email{lrolen@math.uni-koeln.de}
\title[On Eisenstein's completion of the Weierstrass zeta function]{A new construction of Eisenstein's completion of the Weierstrass zeta function}
\thanks{
The author thanks the University of Cologne and the DFG for their generous support via the University of Cologne postdoc grant DFG Grant D-72133-G-403-151001011, funded under the Institutional Strategy of the University of Cologne within the German Excellence Initiative. The author is also grateful to Ken Ono for his encouragement to write this note, and to Kathrin Bringmann, Michael Mertens, Georg Oberdieck, and the anonymous referee for useful comments. The author would also like to thank Martin Raum for pointing out the connection of this work with functions considered in \cite{Oberdieck}, as well as Nikolaos Diamantis for pointing out connections to higher order modular forms as in the comment preceding the proof of Theorem \ref{mainthm}.
}
\begin{document}

\bibliographystyle{amsplain}
\maketitle

One of the first basic facts in the theory of elliptic functions states that any elliptic function must have at least two poles in a fundamental parallelogram. However, if we relax the notion of elliptic function to include doubly-periodic, non-holomorphic functions, then simple functions exist with only a single pole. This was first noticed by Eisenstein, who began with the important Weierstrass zeta-function, defined for any lattice $\Lambda$ by
\begin{equation}\label{WeierstrassZeta}
\zeta_\Lambda(z):=\frac{1}{z}+\sum_{w\in \Lambda\setminus \{0\}}
\left( \frac{1}{z-w}+\frac{1}{w}+\frac{z}{w^2}\right).
\end{equation}

Note that $\zeta_\Lambda$ is not an elliptic function, but that it nearly is, thanks to the fact that $\zeta'_{\Lambda}(z)=-\wp_\Lambda(z)$, where $\wp_\Lambda$ is the usual Weierstrass $\wp$ function. Eisenstein observed  (see \cite{Weil}) that this function could be modified to become lattice-invariant at the expense of holomorphicity, and to this end defined
\begin{equation}\label{WeierComplete}
\widehat{\zeta}_\Lambda(z):=\zeta_\Lambda(z)-S(\Lambda)z-\frac{\pi}{\operatorname{Vol}(\Lambda)}\overline{z},
\end{equation}
where
$
S(\Lambda):=\lim_{s\rightarrow 0^{+}}\sum_{w\in \Lambda \setminus \{0\}}
\frac{1}{w^2 |w|^{2s}},
$
and where $\operatorname{Vol}(\Lambda)$ denotes the volume of $\C/\Lambda$. Throughout this paper, we will suppose that $\Lambda$ is written in the form $\Lambda=\Lambda_\tau:=\Z+\Z\tau$ for some $\tau\in\mathbb H$, and we may then state Eisenstein's result as follows.
\begin{theorem}\label{mainthm}
For any $\tau\in\mathbb H$, $w\in\Lambda_\tau$, $z\in\C$, we have
\[
\widehat\zeta_{\Lambda_{\tau}}(z+w)=\widehat\zeta_{\Lambda_\tau}(z)
.
\]
\end{theorem}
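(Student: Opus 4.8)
The plan is to reduce everything to the two generators $w=1$ and $w=\tau$ of $\Lambda_\tau$, to show that the periodicity defect of $\widehat\zeta_{\Lambda_\tau}$ is a constant, and to evaluate that constant using the classical value of the Weierstrass quasi-period together with Hecke's evaluation of the regularized weight-two Eisenstein series. To begin, fix $w\in\Lambda_\tau$ and set $D_w(z):=\widehat\zeta_{\Lambda_\tau}(z+w)-\widehat\zeta_{\Lambda_\tau}(z)$. From \eqref{WeierComplete} together with $\zeta'_{\Lambda_\tau}=-\wp_{\Lambda_\tau}$ one gets $\partial_z\widehat\zeta_{\Lambda_\tau}=-\wp_{\Lambda_\tau}-S(\Lambda_\tau)$ and $\partial_{\overline z}\widehat\zeta_{\Lambda_\tau}=-\pi/\operatorname{Vol}(\Lambda_\tau)$, both of which are $\Lambda_\tau$-periodic in $z$ (the first since $\wp_{\Lambda_\tau}$ is elliptic, the second trivially). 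Hence $\partial_z D_w=\partial_{\overline z}D_w=0$ away from $\Lambda_\tau$; as $\C\setminus\Lambda_\tau$ is connected, $D_w$ is a constant $c_w\in\C$, and these constants satisfy $c_{w_1+w_2}=c_{w_1}+c_{w_2}$, so it suffices to prove $c_1=c_\tau=0$. Integrating $\zeta'_{\Lambda_\tau}=-\wp_{\Lambda_\tau}$ once more gives the quasi-periodicity $\zeta_{\Lambda_\tau}(z+w)=\zeta_{\Lambda_\tau}(z)+\eta(w)$ for an additive homomorphism $\eta\colon\Lambda_\tau\to\C$, so using $\operatorname{Vol}(\Lambda_\tau)=\Im(\tau)$ one reads off
\[
c_w=\eta(w)-S(\Lambda_\tau)\,w-\frac{\pi}{\Im(\tau)}\,\overline w .
\]

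The case $w=1$ is the heart of the matter: writing $\eta_1:=\eta(1)$, I must show $\eta_1=S(\Lambda_\tau)+\pi/\Im(\tau)$. Introduce the Eisenstein-summed weight-two series $G_2(\tau):=\sum_{n\neq 0}n^{-2}+\sum_{m\neq 0}\sum_{n\in\Z}(n+m\tau)^{-2}$, the inner sum taken first (this equals $\tfrac{\pi^2}{3}E_2(\tau)$). Two classical facts then finish the job. First, the Weierstrass quasi-period is $\eta_1=G_2(\tau)$: one checks this from the product formula for the Weierstrass $\sigma$-function, noting that the $z$-linear coefficient in the resulting expansion of $\zeta_{\Lambda_\tau}$ must vanish since $\zeta_{\Lambda_\tau}(z)-1/z=O(z^3)$ (see \cite{Weil}). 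Second, Hecke's trick evaluates the regularization defining $S$: splitting off the terms with $m=0$ and applying Poisson summation to the rest before letting $s\to 0^+$ yields $S(\Lambda_\tau)=\lim_{s\to 0^+}\sum_{w\in\Lambda_\tau\setminus\{0\}}w^{-2}|w|^{-2s}=G_2(\tau)-\pi/\Im(\tau)$, the correction $-\pi/\Im(\tau)$ being precisely the hallmark of the difference between symmetric and iterated summation. Subtracting, $c_1=\eta_1-S(\Lambda_\tau)-\pi/\Im(\tau)=0$. For $w=\tau$ I would not repeat this but instead invoke Legendre's relation $\eta_1\tau-\eta(\tau)=2\pi i$, obtained by integrating $\zeta_{\Lambda_\tau}$ counterclockwise around a fundamental parallelogram with $0$ in its interior and applying the residue theorem; substituting $\eta(\tau)=\eta_1\tau-2\pi i$ together with $\eta_1=S(\Lambda_\tau)+\pi/\Im(\tau)$ into the displayed formula then gives $c_\tau=\tfrac{\pi}{\Im(\tau)}(\tau-\overline\tau)-2\pi i=0$.

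The only genuinely delicate point is the identity $\eta_1-S(\Lambda_\tau)=\pi/\operatorname{Vol}(\Lambda_\tau)$ underlying the case $w=1$: the lattice sum naively attached to $\eta_1$ is only conditionally convergent and summation-order dependent, and it is exactly the discrepancy between Eisenstein's symmetric (Hecke-regularized) summation and iterated summation that accounts for the correction term $\pi/\operatorname{Vol}(\Lambda_\tau)$ appearing in \eqref{WeierComplete}; everything else is routine elliptic-function bookkeeping. An alternative route --- one that also makes the periodicity transparent from the start --- would be to first establish a Kronecker--Eisenstein representation of the shape $\widehat\zeta_{\Lambda_\tau}(z)=\lim_{s\to 0^+}\sum_{w\in\Lambda_\tau}(z-w)^{-1}|z-w|^{-2s}$, the sum defined by analytic continuation from the half-plane $\Re(s)>1/2$ of absolute convergence; granting such a representation, the substitution $z\mapsto z+w_0$ followed by the reindexing $w\mapsto w+w_0$ proves the theorem immediately, and the entire burden shifts to verifying that this regularized sum reproduces the closed form \eqref{WeierComplete}.
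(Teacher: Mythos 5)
Your proof is correct, but it takes a genuinely different route from the paper's. The paper establishes ellipticity structurally: it writes $\zeta_{\Lambda_\tau}(z)-\eta_1z$ as the logarithmic derivative of the Jacobi theta function, replaces the $z$-derivative by the raising operator $Y_+^{1/2,1/2}$ for Jacobi forms, and observes that the resulting quotient is a Jacobi form of index $0$, hence elliptic in $z$; the identification with \eqref{WeierComplete} then rests on exactly the two classical inputs you also invoke, namely $\eta_1=G_2(\tau)$ and $S(\Lambda_\tau)=G_2(\tau)-\pi/\operatorname{Im}(\tau)$. You instead prove ellipticity by hand: the defect $D_w$ has vanishing $\partial_z$ and $\partial_{\overline z}$ derivatives on the connected set $\C\setminus\Lambda_\tau$, hence equals a constant $c_w=\eta(w)-S(\Lambda_\tau)w-\pi\overline w/\operatorname{Im}(\tau)$, and additivity reduces the theorem to $c_1=c_\tau=0$; the case $w=1$ is precisely the identity $\eta_1-S(\Lambda_\tau)=\pi/\operatorname{Im}(\tau)$, and the case $w=\tau$ follows from Legendre's relation (your arithmetic $c_\tau=\pi(\tau-\overline\tau)/\operatorname{Im}(\tau)-2\pi i=0$ checks out, with the correct sign of $2\pi i$ for $\operatorname{Im}(\tau)>0$). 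Your argument is more elementary and makes transparent exactly why the two correction terms in \eqref{WeierComplete} are the right ones, at the cost of treating the two generators separately and importing Legendre's relation; the paper's Jacobi-form argument needs more machinery but delivers the full lattice-invariance in one stroke and generalizes to the constructions of non-holomorphic lifts mentioned in the introduction. One caveat: your parenthetical justification of $\eta_1=G_2(\tau)$ via the vanishing linear coefficient of $\zeta_{\Lambda_\tau}(z)-1/z$ is only a gesture at a proof --- the whole subtlety is the order of summation you correctly flag as the delicate point --- but this identity is classical and is the same fact the paper cites without proof from Chapter 4 of \cite{Lang1987}, so you are on equal footing there.
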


This theorem has begun to play an important role in the theory of harmonic Maass forms, and was crucial to the main results of \cite{AGOR,Guerzhoy}. In particular, this simple completion of $\zeta_{\Lambda_\tau}$ provides a powerful tool to construct harmonic Maass forms of weight zero which serve as canonical lifts under the differential operator $\xi_{0}$ of weight 2 cusp forms, and this has been shown in \cite{AGOR} to have deep applications to determining vanishing criteria for central values and derivatives of twisted Hasse-Weil $L$-functions for elliptic curves. 

Given the importance of this result, it seems worthwhile to present a new, direct proof of Theorem \ref{mainthm}. Here we provide a motivated proof using the standard theory of differential operators for Jacobi forms. The author would like to point out that as noted by Martin Raum, the consideration of the logarithmic derivative of the Jacobi theta function which appears in the proof of Theorem \ref{mainthm} here has previously appeared in the context of Jacobi forms in work of Oberdieck \cite{Oberdieck}. We note in passing that extensions of the ideas in the proof below can easily be used to construct many new non-holomorphic modular forms corresponding to classical weight $2$ cusp forms with well-prescribed analyticity properties. In particular, as Nikolaos Diamantis has pointed out to the author, one may use the methods here to construct non-holomorphic modular lifts under the $\xi$ operator of a special family of higher order modular forms.

\begin{proof}[Proof of Theorem \ref{mainthm}]
We begin by recalling another classical function of Weierstrass associated to $\Lambda_\tau$, defined for $z\in\C$ by
\[
\sigma_{\Lambda_\tau}(z)
:=
\sigma(z)
=
\prod_{w\in\Lambda_\tau\setminus\{0\}}\left(1-\frac zw\right)\mathrm{exp}\left(\frac zw+\frac{z^2}{2w^2}\right).
\]
It is a classical fact, which follows directly from the product definition of $\sigma$, that the logarithmic derivative is given by
\[\frac{\sigma_{\Lambda_\tau}'(z)}{\sigma_{\Lambda_\tau}(z)}=\zeta_{\Lambda_\tau}(z),\]
where the $'$ denotes differentiation in $z$. Now, we recall the classical identity (see, e.g., Theorem 3.9 of \cite{Polishchuk}):
\[
\vartheta(z;\tau)=-2\pi\eta^3(\tau)\mathrm{exp}\left(-\frac{\eta_1z^2}{2}\right)\sigma_{\Lambda_\tau}(z),
\]
where
\[
\vartheta(z;\tau)
:=
\sum_{n\in\frac12+\Z}e^{\pi in^2\tau+2\pi i n\left(z+\frac12\right)}
\] is the Jacobi theta function, 
\[\eta(\tau):=q^{\frac1{24}}\prod\left(1-q^n\right)\]
is Dedekind's eta function,  and $\eta_1$ is the quasi-period defined by
\[
\eta_1(\tau):=\eta_1=\zeta_{\Lambda_\tau}(z+1)-\zeta_{\Lambda_\tau}(z)
\]
(note that this is well-defined independent of the choice of $z$ as $\zeta'_{\Lambda_\tau}(z)=-\wp_{\Lambda_\tau}(z)$ is elliptic).
The interested reader is also invited to prove the preceding equality; here we simply offer the hint that one should compare the transformations of both sides under shifting $z$ by lattice points (which is enough to verify that this identity holds up to a constant in $\tau$, which will not show up in our logarithmic derivative anyway). Hence, we find that
\[
\frac{\vartheta'(z;\tau)}{\vartheta(z;\tau)}=\zeta_{\Lambda{_\tau}}(z;\tau)-\eta_1z.
\]
Now, it is well-known that $\vartheta(z;\tau)$ is a Jacobi form of index $\frac12$ (see \cite{EichlerZagier} for a general exposition of these objects). The logarithmic derivative of a Jacobi form is no longer a Jacobi form, but it nearly is. Analogous to the standard raising operator defined on modular forms, there is a canonical raising operator on Jacobi forms, which maps the Jacobi theta function to the function
\[Y_+^{\frac12,\frac12}(\vartheta)(z;\tau)=\vartheta'(z;\tau)+2\pi i\frac{\operatorname{Im}(z)}{\operatorname{Im}(\tau)}\vartheta(z;\tau),\]
which is then a Jacobi form of weight $\frac32$ and index $\frac12$ (see \cite{BerndtSchmidt}). Hence, replacing the logarithmic derivative with a ``logarithmic-raising'' operator, we find that
\[
\frac{Y_+^{\frac12,\frac12}(\vartheta)(z;\tau)}{\vartheta(z;\tau)}=\zeta_{\Lambda_{\tau}}(z;\tau)-\eta_1z+2\pi i\frac{\operatorname{Im}(z)}{\operatorname{Im}(\tau)}
\]
is a Jacobi form of index zero, i.e., it transforms as an elliptic function in $z$. We claim that this last expression is equal to Eisenstein's completion \eqref{WeierComplete}. To make this connection, we use the following classical relation
(see Chapter 4 of \cite{Lang1987}):
\[
E_2(\tau)=\frac{3\eta_1}{\pi^2}
,
\]
where $E_2$ is the normalized weight $2$ Eisenstein series. 
Expressed differently, we have $\eta_1=G_2(\tau),$ where $G_2=2\zeta(2)E_2$. Denoting by $G_2^*$ the non-holomorphic completion 
\[2\zeta(2)\left(E_2-\frac{3}{\pi \operatorname{Im}(\tau)}\right)=G_2(\tau)-\frac{\pi}{\operatorname{Im}(\tau)},\] we have shown that
\[\zeta_{\Lambda_\tau}(z)-zG_2^*(\tau)-\pi\frac{\overline z}{\operatorname{Im}(\tau)}\]
is an elliptic function. The proof is completed by noting that $S_{\Lambda_\tau}=G_2^*(\tau)$ (cf. \S2.3 of \cite{Zagier}) and that $\operatorname{Vol}(\Lambda_\tau)=\operatorname{Im}(\tau)$, which implies that 
\[\widehat{\zeta}_{\Lambda_\tau}=\zeta_{\Lambda_\tau}(z)-zG_2^*(\tau)-\pi\frac{\overline z}{\operatorname{Im}(\tau)}.\]

\end{proof}

\end{document}